\documentclass[12pt]{article}
\usepackage[T1]{fontenc}
\usepackage{iwona,amsmath,amsthm,amsfonts,amssymb,microtype,thmtools,underscore,mathtools,anyfontsize} 
\usepackage[usenames,dvipsnames,svgnames,table]{xcolor}
\usepackage[unicode=true]{hyperref}
\hypersetup{%
colorlinks,
breaklinks=true,
linkcolor={black},
citecolor={black},
urlcolor={blue!60!black},
pdftitle={Tree-Partitions with Small Bounded Degree Trees},
pdfauthor={Distel--Wood}
}
\usepackage[noabbrev,capitalise]{cleveref}
\crefname{lem}{Lemma}{Lemmas}
\crefname{thm}{Theorem}{Theorems}
\crefname{cor}{Corollary}{Corollaries}
\crefname{prop}{Proposition}{Propositions}
\crefname{conj}{Conjecture}{Conjectures}
\crefname{rmk}{Remark}{Remarks}
\crefname{openproblem}{Open Problem}{Open Problems}
\crefformat{equation}{(#2#1#3)}
\Crefformat{equation}{Equation #2(#1)#3}
\usepackage[shortlabels]{enumitem}
\setlist[itemize]{topsep=0ex,itemsep=0ex,parsep=0.4ex}
\setlist[enumerate]{topsep=0ex,itemsep=0ex,parsep=0.4ex}
\usepackage[longnamesfirst,numbers,sort&compress]{natbib}
\makeatletter
\def\NAT@spacechar{~}
\makeatother
\setlength{\bibsep}{0.4ex plus 0.2ex minus 0.2ex}
\usepackage[margin=30mm]{geometry}
\renewcommand{\baselinestretch}{1.1}
\setlength{\parindent}{0cm}
\setlength{\parskip}{1.5ex}
\setlength{\footnotesep}{\baselinestretch\footnotesep}
\allowdisplaybreaks
\sloppy
\newcommand{\defn}[1]{\textcolor{Maroon}{\emph{#1}}}

\newcommand{\NN}{\mathbb{N}}

\renewcommand{\geq}{\geqslant}
\renewcommand{\leq}{\leqslant}

\DeclareMathOperator{\tw}{tw}
\DeclareMathOperator{\tpw}{tpw}


\renewcommand{\thefootnote}{\fnsymbol{footnote}}
\allowdisplaybreaks
\theoremstyle{plain}
\newtheorem{thm}{Theorem}
\newtheorem{lem}[thm]{Lemma}
\newtheorem{cor}[thm]{Corollary}

\theoremstyle{definition}

\begin{document}

\title{\bf\Large Tree-Partitions with Small Bounded Degree Trees\footnote{\today. This work was initiated at the \href{https://www.matrix-inst.org.au/events/structural-graph-theory-downunder-ll/}{Structural Graph Theory Downunder II} workshop at the Mathematical Research Institute MATRIX (March 2022). A preliminary version of this paper appears in the 
\href{https://www.matrix-inst.org.au/2021-matrix-annals/}{2021-22 MATRIX Annals}.}}
\author{Marc Distel\,\footnotemark[3] \qquad David~R.~Wood\,\footnotemark[3]}

\footnotetext[3]{School of Mathematics, Monash University, Melbourne, Australia (\texttt{\{marc.distel,david.wood\}@monash.edu}). Research of D.W.\ supported by the Australian Research Council. Research of M.D.\ supported by an Australian Government Research Training Program Scholarship.}

\date{}
\maketitle

\begin{abstract}
A \defn{tree-partition} of a graph $G$ is a partition of $V(G)$ such that identifying the vertices in each part gives a tree. It is known that every graph with treewidth $k$ and maximum degree $\Delta$ has a tree-partition with parts of size $O(k\Delta)$. We prove the same result with the extra property that the underlying tree has maximum degree $O(\Delta)$ and $O(|V(G)|/k)$ vertices. 
\end{abstract}


\renewcommand{\thefootnote}{\arabic{footnote}}

\section{Introduction}

For a graph~$G$ and a tree~$T$, a \defn{$T$-partition} of~$G$ is a partition~${(V_x \colon x\in V(T))}$ of~${V(G)}$ indexed by the nodes of~$T$, such that for every edge~${vw}$ of~$G$, if~${v \in V_x}$ and~${w \in V_y}$, then~${x = y}$ or~${xy \in E(T)}$. The \defn{width} of a $T$-partition is~${\max\{ |{V_x}| \colon x \in V(T)\}}$. The \defn{tree-partition-width}\footnote{Tree-partition-width has also been called \defn{strong treewidth} \citep{BodEng-JAlg97,Seese85}.} of a graph $G$ is the minimum width of a tree-partition of $G$. 

Tree-partitions were independently introduced by \citet{Seese85} and \citet{Halin91}, and have since been widely investigated \citep{Bodlaender-DMTCS99,BodEng-JAlg97, DO95,DO96,Edenbrandt86, Wood06,Wood09,BGJ22}. 
Applications of tree-partitions include 
graph drawing~\citep{CDMW08,GLM05,DMW05,DSW07,WT07}, 
graphs of linear growth~\citep{CDGHHHMW}, 
nonrepetitive graph colouring~\citep{BW08}, 
clustered graph colouring~\citep{ADOV03,LO18}, 
monadic second-order logic~\citep{KuskeLohrey05}, 
network emulations~\citep{Bodlaender-IPL88, Bodlaender-IC90, BvL-IC86, FF82}, statistical learning theory~\citep{ZA22}, 
size Ramsey numbers~\citep{DKCPS,KLWY21}, 
and the edge-{E}rd{\H{o}}s-{P}{\'o}sa property~\citep {RT17,GKRT16,CRST18}. Tree-partitions are also related to graph product structure theory since a graph $G$ has a $T$-partition of width at most $k$ if and only if $G$ is isomorphic to a subgraph of $T\boxtimes K_k$ for some tree $T$; see~\citep{UTW,DJMMW,DEMWW22} for example.

Bounded tree-partition-width implies bounded treewidth\footnote{A \defn{tree-decomposition} of a graph $G$ is a collection $(B_x\subseteq V(G):x\in V(T))$ of subsets of $V(G)$ (called \defn{bags}) indexed by the nodes of a tree $T$, such that: (a) for every edge $uv\in E(G)$, some bag $B_x$ contains both $u$ and $v$; and (b) for every vertex $v\in V(G)$, the set $\{x\in V(T):v\in B_x\}$ induces a non-empty subtree of $T$. The \defn{width} of a tree-decomposition is the size of the largest bag, minus $1$. 
The \defn{treewidth} of a graph $G$, denoted by \defn{$\tw(G)$}, is the minimum width of a tree-decomposition of $G$. Treewidth is the standard measure of how similar a graph is to a tree. Indeed, a connected graph has treewidth 1 if and only if it is a tree. Treewidth is of fundamental importance in structural and algorithmic graph theory; see \citep{Reed03,HW17,Bodlaender98} for surveys.}, as noted by \citet{Seese85}. In particular, for every graph~$G$,
\[\tw(G) \leq 2\tpw(G)-1.\]
Of course, ${\tw(T) = \tpw(T) = 1}$ for every tree~$T$. 
But in general, $\tpw(G)$ can be much larger than~$\tw(G)$. 
For example, fan graphs on~$n$ vertices have treewidth~2 and tree-partition-width~$\Omega(\sqrt{n})$. 
On the other hand, the referee of \citep{DO95} showed that if the maximum degree and treewidth are both bounded, then so is the tree-partition-width, which is one of the most useful results about tree-partitions. A graph $G$ is \defn{trivial} if $E(G)=\emptyset$. Let \defn{$\Delta(G)$} be the maximum degree of $G$. 
 
\begin{thm}[\cite{DO95}]
\label{TreeProduct}
For any non-trivial graph $G$,
$$\tpw(G) \leq 24 (\tw(G)+1)\Delta(G).$$
\end{thm}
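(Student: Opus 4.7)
The plan is to proceed by induction on $|V(G)|$, converting a tree-decomposition of $G$ into a tree-partition via a recursive balanced-separator argument. Write $k := \tw(G)$ and $\Delta := \Delta(G)$.

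\textbf{Setup.} Fix a tree-decomposition $(B_x : x \in V(T))$ of $G$ of width $k$, so $|B_x| \le k+1$ for every $x \in V(T)$. A standard centroid-style argument on $T$---weighting each subtree of $T - x$ by the number of vertices in the union of its bags---produces a node $x^\star \in V(T)$ whose bag $B_{x^\star}$ is a balanced separator of $G$, meaning every component of $G - B_{x^\star}$ has at most $\tfrac{2}{3}|V(G)|$ vertices. To make the induction go through cleanly, I would strengthen the target: for every graph $G$ and every prescribed set $S \subseteq V(G)$ with $|S| \le k+1$, there is a tree-partition of $G$ of width at most $24(k+1)\Delta$ whose root part contains $S$ together with every vertex of $V(G) \setminus S$ adjacent to $S$.

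\textbf{Inductive step.} The base case $|V(G)| \le 24(k+1)\Delta$ is handled by the single-part tree-partition. For the step, given $S$, choose $B_{x^\star}$ as above and group the components of $G - (S \cup B_{x^\star})$ into two bundles $\mathcal{A}, \mathcal{B}$, each containing at most $\tfrac{2}{3}|V(G)|$ vertices (possible by greedy bin-packing, since every component is small). Apply the strengthened hypothesis to $G_\mathcal{A} := G[S \cup B_{x^\star} \cup V(\mathcal{A})]$ and $G_\mathcal{B} := G[S \cup B_{x^\star} \cup V(\mathcal{B})]$, each with prescribed set $S \cup B_{x^\star}$. This yields tree-partitions $\mathcal{P}_\mathcal{A}, \mathcal{P}_\mathcal{B}$ whose root parts each contain $S \cup B_{x^\star}$ and all of its external neighbours. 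Combine them by identifying the two root parts into a single merged part and joining the two trees at the identified node. No edge of $G$ runs between $V(\mathcal{A})$ and $V(\mathcal{B})$, so the result is genuinely a tree-partition of $G$; every non-root part is inherited unchanged with size at most $24(k+1)\Delta$, and the merged root has size at most $|S \cup B_{x^\star}|(1 + \Delta) \le 2(k+1)(1 + \Delta)$, still within the budget.

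\textbf{Main obstacle.} The delicate point is maintaining the strengthened hypothesis with a single fixed constant: the prescribed set $S \cup B_{x^\star}$ in the recursive call can have size up to $2(k+1)$ rather than $k+1$, which naively breaks the induction. The standard fixes are either to allow the prescribed set to grow to $O(k+1)$ throughout (absorbing the factor into the constant), or to choose $B_{x^\star}$ disjoint from $S$ via a harmless augmentation of the tree-decomposition. The bounded-degree hypothesis enters precisely here: the number of external neighbours of the prescribed set is at most $|S \cup B_{x^\star}| \cdot \Delta$, which keeps the root part linear in $(k+1)\Delta$ at every level of the recursion. The slack in the constant $24$ absorbs this set-doubling together with the $\tfrac{2}{3}$-balance overhead; optimising it is not the priority in a first pass.
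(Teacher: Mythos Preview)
Your inductive framework with a prescribed set and a separator is the right shape, but the ``main obstacle'' you flag is a genuine gap that neither of your proposed fixes closes. Because your separator $B_{x^\star}$ balances $V(G)$ rather than $S$, each recursive level shrinks $|V(G)|$ only by a constant factor while adding $k+1$ new vertices to the prescribed set. The recursion therefore has depth $\Theta(\log(|V(G)|/k\Delta))$, and the prescribed set grows to $\Theta(k\log|V(G)|)$; the merged root at depth $\ell$ has size $\Theta(\ell(k+1)\Delta)$, so you end up proving only a width bound of order $(k+1)\Delta\log|V(G)|$. ``Allowing $S$ to grow to $O(k+1)$'' assumes what you need to prove, and choosing $B_{x^\star}$ disjoint from $S$ does not stop $|S\cup B_{x^\star}|$ from growing.

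The paper (following \cite{DO95}, via \cref{heart}) resolves this with two different inductive steps keyed to $|S|$, and crucially balances $S$ rather than $V(G)$. The prescribed set is allowed to range over $4k\le|S|\le 12k\Delta$. When $|S|$ is already small ($\le 12k$), one does \emph{not} split: set $S':=N_G(S)\setminus S$ (so $|S'|\le \Delta|S|\le 12k\Delta$), recurse on $G-S$ with prescribed set $S'$, and attach $S$ as a new leaf bag adjacent to the bag containing $S'$. When $|S|$ is large ($12k\le|S|\le 12k\Delta$), apply the Robertson--Seymour separator lemma to balance $S$: one gets $G=G_1\cup G_2$ with $|V(G_1\cap G_2)|\le k$ and $|S\cap V(G_i)|\le\tfrac23|S|$, hence each $S_i:=(S\cap V(G_i))\cup V(G_1\cap G_2)$ satisfies $|S_i|\le\tfrac23|S|+k<|S|$ and $|S_i|\ge\tfrac13|S|\ge 4k$. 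Now recurse on each $G_i$ with prescribed set $S_i$ and merge the two root bags. The point is that in the split step $|S|$ strictly decreases, so after $O(\log\Delta)$ splits one is back in the small-$S$ regime; the alternation between ``neighbourhood'' and ``split'' steps is what keeps $|S|$ in the window $[4k,12k\Delta]$ throughout and yields the $O(k\Delta)$ width independent of $|V(G)|$.
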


\cref{TreeProduct} is stated in \cite{DO95} with ``$\tw(G)$'' instead of ``$\tw(G)+1$'', but a close inspection of the proof shows that ``$\tw(G)+1$'' is needed. \citet{Wood09} showed that \cref{TreeProduct} is best possible up to the multiplicative constant, and also improved the constant $24$ to $9+6\sqrt{2}\approx 17.48$.

This paper proves similar results to \cref{TreeProduct} where the tree $T$ indexing the tree-partition has two extra properties. 

The first property is the maximum degree of $T$. Consider a tree-partition $(B_x:x\in V(T))$ of a graph $G$ with width $k$. For each node $x\in V(T)$, there are at most $\sum_{v\in B_x}\deg(v)$ edges between $B_x$ and $G-B_x$. Thus we may assume that $\deg_T(x)\leq |B_x|\Delta(G)\leq k\Delta(G)$, otherwise delete an `unused' edge of $T$ and add an edge to $T$ between leaf vertices of the resulting component subtrees. It follows that if $\tpw(G)\leq k$ then $G$ has a $T$-partition of width at most $k$ for some tree $T$ with maximum degree at most $\max\{k\Delta(G),2\}$. By \cref{TreeProduct}, every graph $G$ has a $T$-partition of width at most $24 (\tw(G)+1)\Delta(G)$ for some tree $T$ with maximum degree at most $24 (\tw(G)+1)\Delta(G)^2$. This fact has been used in several applications of \cref{TreeProduct} (see \citep{CDMW08,DSW07} for example). 

The second property is the number of vertices in $T$. This property is motivated by results about size-Ramsey number by \citet{DKCPS}, where tree-partitions of $n$-vertex graphs with $\tw(G)\in O(\sqrt{n})$ play a critical role, and it is essential that $\Delta(T)$ is independent of $\tw(G)$ and $|V(T)| \ll |V(G)|$. 

The following theorem improves the above-mentioned upper bound on $\Delta(T)$ so that it is independent of $\tw(G)$, and provides a non-trivial upper bound on $|V(T)|$. Let $\NN=\{1,2,\dots\}$. 

\begin{thm}
\label{ImprovedTreeProduct}
For any integers $k,d\in\NN$, every non-trivial graph $G$ with $\tw(G)\leq k-1$ and $\Delta(G)\leq d$ has a $T$-partition of width at most \(18kd\), for some tree $T$ with $\Delta(T)\leq 6d$ and $|V(T)|\leq\max\{\frac{|V(G)|}{2k},1\}$.
\end{thm}

\cref{ImprovedTreeProduct} enables a $\tw(G)\Delta(G)^2$ term to be replaced by a $\Delta(G)$ term in various  results~\citep{CDMW08,DSW07}. And since $\Delta(T)$ is independent of $\tw(G)$, and $|V(T)|\ll |V(G)|$ when $\tw(G)$ is unbounded, \cref{ImprovedTreeProduct} is suitable for the applications to size-Ramsey numbers in  \citep{DKCPS}. Indeed, \cref{ImprovedTreeProduct} improves upon a similar result of \citet[Lemma~2.1]{DKCPS} which has an extra $O(\log n)$ factor in the width.

Here we give an example of \cref{ImprovedTreeProduct}. \citet{AST90} proved that every $K_t$-minor-free graph $G$ has treewidth at most  $t^{3/2}|V(G)|^{1/2}-1$. \cref{ImprovedTreeProduct} thus implies:

\begin{cor}
\label{MinorClosedClass}
Every $K_t$-minor-free graph with maximum degree $\Delta$ has a $T$-partition of width at most $18t^{3/2}\Delta|V(G)|^{1/2}$, for some tree $T$ with $\Delta(T)\leq 6\Delta$ and $|V(T)|\leq |V(G)|^{1/2}/2t^{3/2}$.
\end{cor}

As mentioned above, \citet{Wood09} improved the constant $24$ to $9+6\sqrt{2}$ in \cref{TreeProduct}. By tweaking the constants in the proof of \cref{ImprovedTreeProduct}, we match this constant with a small increase in the bound on $\Delta(T)$; see \cref{tweaking}.
We choose to first present the proof with integer coefficients for ease of understanding.

Our final result shows that the linear upper bound on $\Delta(T)$ in \cref{ImprovedTreeProduct} is best possible even for trees.

\begin{restatable}{prop}{LowerBound}
\label{LowerBound}
For any integer $\Delta\geq 3$ there exist $\alpha>0$ such that there are infinitely many trees $X$ with maximum degree $\Delta$ such that for every tree $T$ with maximum degree less than $\Delta$, every $T$-partition of $X$ has width at least $|V(X)|^\alpha$. Moreover, if $\Delta=3$ then $\alpha$ can be taken to be arbitrarily close to $1$. 
\end{restatable}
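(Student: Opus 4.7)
The plan is to take $X = X_h$ to be the rooted tree of height $h$ in which the root $r$ has $\Delta$ children and every other internal vertex has $\Delta - 1$ children, so that $X_h$ is a tree, $\Delta(X_h) = \Delta$, the number of vertices at depth exactly $d$ is $\Delta(\Delta-1)^{d-1}$ for $d\ge 1$, and $|V(X_h)| = \Theta_{\Delta}((\Delta-1)^h)$. Letting $h\to\infty$ will produce the infinite family of $X$.

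Fix any tree $T$ with $\Delta(T)\le\Delta-1$ and any $T$-partition $(B_x:x\in V(T))$ of $X_h$. Define $\phi\colon V(X_h)\to V(T)$ by requiring $v\in B_{\phi(v)}$ and set $x_0 := \phi(r)$. The first step is a distance-contraction inequality: for every $v\in V(X_h)$,
\[
\dist_T(x_0,\phi(v)) \le \operatorname{depth}_{X_h}(v).
\]
This is immediate by tracing the unique $X_h$-path from $r$ to $v$: consecutive vertices $u,u'$ on this path satisfy $\phi(u)=\phi(u')$ or $\phi(u)\phi(u')\in E(T)$ by definition of a $T$-partition, so the image sequence (after deleting consecutive repetitions) is a walk in $T$ from $x_0$ to $\phi(v)$ of length at most $\operatorname{depth}_{X_h}(v)$.

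The second step is pigeonhole. In any tree of maximum degree at most $\Delta-1$, the ball of radius $h$ around any node has at most $b_h := 1+(\Delta-1)\sum_{i=0}^{h-1}(\Delta-2)^i$ vertices, which equals $2h+1$ when $\Delta=3$ and is $O_\Delta\bigl((\Delta-2)^h\bigr)$ when $\Delta\ge 4$. On the other hand, $X_h$ has $\Delta(\Delta-1)^{h-1}$ vertices at depth exactly $h$, and by the previous step $\phi$ maps all of them into the $T$-ball of radius $h$ around $x_0$. Hence some bag has size at least $\Delta(\Delta-1)^{h-1}/b_h$. For $\Delta\ge 4$ this is $\Omega_\Delta\bigl(((\Delta-1)/(\Delta-2))^h\bigr)=\Omega(|V(X_h)|^{\alpha})$ with
\[
\alpha := \log_{\Delta-1}\!\Bigl(\tfrac{\Delta-1}{\Delta-2}\Bigr)>0.
\]
For $\Delta=3$ the bound becomes $\Omega(2^h/h)$, which exceeds $|V(X_h)|^{1-\epsilon}$ for every fixed $\epsilon>0$ once $h$ is large enough, so $\alpha$ can be chosen arbitrarily close to $1$.

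There is no serious obstacle here. The only subtlety is the case split $\Delta=3$ versus $\Delta\ge 4$: when $\Delta-1=2$ the tree $T$ is a path and its balls grow only linearly in the radius, which is exactly what lets $\alpha$ approach $1$ in that regime, whereas for $\Delta\ge 4$ the balls grow at rate $\Delta-2$, genuinely slower than the rate $\Delta-1$ at which $X_h$ grows, and the ratio of these two rates is what gives the exponent $\alpha$.
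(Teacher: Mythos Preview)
Your proposal is correct and follows essentially the same route as the paper: the same complete $\Delta$-regular trees $X_h$, the same observation that $\phi(r)$ forces $T$ to have radius at most $h$, the same ball-size bound in a tree of maximum degree $\Delta-1$, and the same pigeonhole finish (the paper pigeonholes over all of $V(X_h)$ rather than just the leaves, which is immaterial). The only cosmetic point is that the statement asks for width $\ge |V(X)|^\alpha$ rather than $\Omega(|V(X)|^\alpha)$, so to match it literally you should take $\alpha$ strictly smaller than $\log_{\Delta-1}\!\frac{\Delta-1}{\Delta-2}$ and then choose $h$ large enough to absorb the hidden constants.
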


\section{Main Proofs}

The proof of \cref{ImprovedTreeProduct} closely follows the proof of \cref{TreeProduct}, except that we pay attention to $\Delta(T)$ and $|V(T)|$. 
The next lemma is the heart of our proof. 

\begin{lem}
\label{heart}
For $k,d\in\NN$, for any graph $G$ with $\tw(G)\leq k-1$ and $\Delta(G)\leq d$, for any set $S\subseteq V(G)$ with $4 k\leq|S| \leq 12 kd$, there exists a tree-partition $(B_x:x\in V(T))$ of $G$ such that:
\begin{itemize}
    \item $|B_x|\leq 18kd $ for each $x\in V(T)$,
    \item $\Delta(T)\leq 6d$, 
    \item $|V(T)|\leq \frac{|V(G)|}{2k}$.
\end{itemize}
Moreover, there exists $z\in V(T)$ such that:
\begin{itemize}
    \item $S\subseteq B_z$, 
    \item $|B_z|\leq \frac32|S|-2k$,
    \item $\deg_T(z)\leq \frac{|S|}{2k} - 1$.
\end{itemize}
\end{lem}

\begin{proof}
We proceed by induction on $|V(G)$|.

\textbf{Case 1.} $|V(G-S)|\leq 18kd$: Let $T$ be the tree with $V(T)=\{y,z\}$ and $E(T)=\{yz\}$. Note that $\Delta(T)=1\leq 6 d$ and $|V(T)|=2\leq \frac{|S|}{2k} \leq \frac{|V(G)|}{2k}$ and $\deg_T(z)=1\leq \frac{|S|}{2k}-1$. Let $B_z:=S$ and $B_y:=V(G-S)$. Thus $|B_z|=|S|\leq \frac32 |S|-2k\leq 18kd$ and $|B_y|\leq |V(G-S)|\leq 18kd$. Hence $(B_x:x\in V(T))$ is the desired tree-partition of $G$. 

Now assume that $|V(G-S)|\geq 18kd$.

\textbf{Case 2.} $4k \leq |S|\leq 12 k$: Let $S':=\bigcup\{ N_G(v)\setminus S: v\in S\}$. Thus $|S'|\leq d |S|\leq 12kd$. If $|S'|< 4 k$ then add $4k-|S'|$ vertices from $V(G-S-S')$ to $S'$, so that $|S'|=4k$. This is well-defined since 
$|V(G-S)| \geq 18kd \geq 4k$, implying $|V(G-S-S')| \geq 4k-|S'|$.
By induction, there exists a tree-partition $(B_x:x\in V(T'))$ of $G-S$ 
such that:
\begin{itemize}
    \item $|B_x|\leq 18kd $ for each $x\in V(T'))$,
    \item $\Delta(T')\leq 6d$, 
    \item $|V(T')|\leq \frac{|V(G-S)|}{2k}$.
\end{itemize}
Moreover, there exists $z'\in V(T')$ such that:
\begin{itemize}
    \item $S'\subseteq B_{z'}$, 
    \item $|B_{z'}|\leq \frac32 |S'|-2k \leq 18kd -2k$,
    \item $\deg_{T'}(z')\leq \frac{|S'|}{2k} - 1 \leq 6d - 1$.
\end{itemize}
Let $T$ be the tree obtained from $T'$ by adding one new node $z$ adjacent to $z'$. Let $B_z:=S$. So $(B_x:x\in V(T))$ is a tree-partition of $G$ with width at most $\max\{18kd,|S|\}\leq\max\{18kd,12k\}=18kd$. By construction, $\deg_T(z)=1 \leq \frac{|S|}{2k} - 1$ and $\deg_{T}(z') = \deg_{T'}(z')+1\leq (6d-1) + 1 = 6d$. Every other vertex in $T$ has the same degree as in $T'$. Hence $\Delta(T)\leq 6d$, as desired. 
Also $|V(T)|=|V(T')|+1\leq \frac{|V(G-S)|}{2k} +1 <\frac{|V(G)|}{2k}$ since $|S|\geq 4k$. Finally, $S=B_z$ and $|B_z|=|S| \leq \frac32 |S|-2k$.

\textbf{Case 3.} $12 k \leq |S|\leq 12kd$: By the separator lemma of \citet[(2.6)]{RS-II}, there are induced subgraphs $G_1$ and $G_2$ of $G$ with $G_1\cup G_2=G$ and $|V(G_1\cap G_2)|\leq k$, where $|S\cap V(G_i)|\leq \frac23 |S|$ for each $i\in\{1,2\}$. Let $S_i := (S\cap V(G_i))\cup V(G_1\cap G_2)$ for each $i\in\{1,2\}$.

We now bound $|S_i|$. For a lower bound, since $|S\cap V(G_1)|\leq \frac23 |S|$, we have $|S_2|\geq |S\setminus V(G_1)|\geq \frac13 |S| \geq 4k $. By symmetry, $|S_1|\geq  4k $. For an upper bound, $|S_i|\leq\frac23 |S| + k \leq 8kd + k \leq 12kd$. Also note that $|S_1|+|S_2|\leq |S|+2k$.

We have shown that $4k \leq |S_i|\leq 12kd$ for each $i\in\{1,2\}$. Thus we may apply induction to $G_i$ with $S_i$ the specified set. Hence there exists a tree-partition $(B^i_x:x\in V(T_i))$ of $G_i$ 
such that:
\begin{itemize}
    \item $|B^i_x|\leq 18kd $ for each $x\in V(T_i))$,
    \item $\Delta(T_i)\leq 6d$, 
    \item $|V(T_i)|\leq \frac{|V(G_i)|}{2k}$.
\end{itemize}
Moreover, there exists $z_i\in V(T_i)$ such that:
\begin{itemize}
    \item $S_i\subseteq B_{z_i}$, 
    \item $|B_{z_i}|\leq \frac32|S_i|-2k$,
    \item $\deg_{T_i}(z_i)\leq \frac{|S_i|}{2k}-1$.
\end{itemize}
Let $T$ be the tree obtained from the disjoint union of $T_1$ and $T_2$ by identifying $z_1$ and $z_2$ into a vertex $z$. Let $B_z:= B^1_{z_1}\cup B^2_{z_2}$. Let $B_x:= B^i_x$ for each $x\in V(T_i)\setminus\{z_i\}$. Since $G=G_1\cup G_2$ and $V(G_1\cap G_2)\subseteq B^1_{z_1}\cap B^2_{z_2} \subseteq B_z$, we have that $(B_x:x\in V(T))$ is a tree-partition of $G$. 
By construction, $S\subseteq B_z$ and since $V(G_1\cap G_2)\subseteq B^i_{z_i}$ for each $i$, 
\begin{align*}
    |B_z| 
    & \leq |B^1_{z_1}|+|B^2_{z_2}| - |V(G_1\cap G_2)|\\
    & \leq (\tfrac32|S_1|-2k) +  (\tfrac32|S_2|-2k) - |V(G_1\cap G_2)|\\
    & = \tfrac32( |S_1|+ |S_2|) -4k - |V(G_1\cap G_2)|\\
    & \leq \tfrac32( |S| + 2|V(G_1\cap G_2)| ) -4k - |V(G_1\cap G_2)|\\
    & \leq \tfrac32 |S| + 2|V(G_1\cap G_2)| -4k\\
    & \leq \tfrac32 |S| - 2 k\\
    & < 18 kd.
\end{align*}
Every other part has the same size as in the tree-partition of $G_1$ or $G_2$. So this tree-partition of $G$ has width at most $18kd$. 
Note that 
\begin{align*}
 \deg_T(z)   = \deg_{T_1}(z_1) + \deg_{T_2}(z_2)
    & \leq  (\tfrac{|S_1|}{2k}-1) + (\tfrac{|S_2|}{2k}-1)\\
    & =  \tfrac{|S_1|+|S_2|}{2k} -2\\
    & \leq  \tfrac{|S|+2k}{2k} -2\\
    & =  \tfrac{|S|}{2k} -1\\
    & < 6 d. 
\end{align*}
Every other node of $T$ has the same degree as in $T_1$ or $T_2$. 
Thus $\Delta(T) \leq 6d$. 
Finally, 
\begin{align*}
    |V(T)|  =|V(T_1)|+|V(T_2)|-1 
    & \leq \tfrac{|V(G_1)|}{2k}+\tfrac{|V(G_2)|}{2k}-1\\
    & \leq  \tfrac{|V(G)|+k}{2k}-1\\
    & < \tfrac{|V(G)|}{2k}.
\end{align*}
This completes the proof.
\end{proof}

We now prove our main result. 

\begin{proof}[Proof of \cref{ImprovedTreeProduct}]
First suppose that $|V(G)| < 4 k$. Let $T$ be the 1-vertex tree with $V(T)=\{x\}$, and let $B_x:=V(G)$. Then $(B_x:x\in V(T))$ is the desired tree-partition, since $|V(T)|= 1 \leq \max\{\frac{|V(G)|}{2k},1\}$ and $|B_x|=|V(G)|<4 k \leq 18 kd$ and $\Delta(T)=0\leq 6d$. 
Now assume that $|V(G)| \geq 4 k$. The result follows from \cref{heart}, where $S$ is any set of $4k$ vertices in $G$. 
\end{proof}

We now prove the lower bound. For $\Delta,d\in\NN$ with $\Delta\geq 2$, let $X_{\Delta,d}$ be the tree rooted at a vertex $r$ such that every leaf is at distance $d$ from $r$ and every non-leaf vertex has degree $\Delta$. Observe that $X_{\Delta,d}$ has the maximum number of vertices in a tree with maximum degree $\Delta$ and radius $d$, where  
\[ |V(X_{\Delta,d})| 
= 1+\Delta\sum_{i=0}^{d-1}(\Delta-1)^i.\]
Note that $|V(X_{2,d})|= 2d+1$, and
 if $\Delta\geq 3$ then 
\[ (\Delta-1)^d \leq |V(X_{\Delta,d})| 
=1+\tfrac{\Delta}{\Delta-2}\big((\Delta-1)^d-1\big)
\leq 3(\Delta-1)^d.\]

\LowerBound*

\begin{proof}
First suppose that $\Delta\geq 4$. 
Let $d_0\in\NN$ be sufficiently large so that $(\frac{\Delta-1}{\Delta-2})^{d_0}>3$. 
Let $\alpha:= {1- \log_{\Delta-1}(3^{1/d_0}(\Delta-2))}$, which is positive by the choice of $d_0$. 
Let $d\in\NN$ with $d\geq d_0$. 
It follows that 
$(\Delta-1)^{(1-\alpha)d} \geq 3(\Delta-2)^d $. 
Consider any tree-partition $(B_u:u\in V(T))$ of $X_{\Delta,d}$, where $T$ is any tree with maximum degree at most $\Delta-1$. Let $z$ be the vertex of $T$ such that the root $r\in B_z$. Since adjacent vertices in $X_{\Delta,d}$ belong to adjacent parts or the same part in $T$, every vertex in $T$ is at distance at most $d$ from $z$. Thus $T$ has radius at most $d$, and \[|V(T)|\leq|V(X_{\Delta-1,d})| 
  \leq 3(\Delta-2)^d 
  \leq (\Delta-1)^{(1-\alpha)d} \leq
|V(X_{\Delta,d})|^{1-\alpha}  .\]
By the pigeon-hole principle, there is a vertex $u\in V(T)$ such that 
$|B_u|\geq \frac{|V(X_{\Delta,d})|}{|V(T)|}\geq |V(X_{\Delta,d})|^\alpha$.

Now assume that $\Delta=3$. Let $\alpha\in(0,1)$, let $d_0\in\NN$ be sufficiently large so that $2d_0+1 \leq 2^{(1-\alpha)d_0}$, and let $d\geq d_0$. So $2d+1 \leq 2^{(1-\alpha)d}$. Consider any tree-partition $(B_u:u\in V(T))$ of $X_{3,d}$, where $T$ is any tree with maximum degree at most $2$. By the argument above, $T$ has radius at most $d$, implying \[|V(T)|\leq|V(X_{2,d})|=2d+1 \leq 2^{(1-\alpha)d} \leq |V(X_{3,d})|^{1-\alpha}.\]
Again, there is a vertex $u\in V(T)$ such that 
$|B_u|\geq \frac{|V(X_{3,d})|}{|V(T)|}\geq |V(X_{3,d})|^{\alpha}$.
\end{proof}

\section{Tweaking the Constants} 
\label{tweaking}

Consider the following generalisation of \cref{heart}.

\begin{lem}
\label{alpha}
Fix $k,d\in\NN$ and $\alpha\in\mathbb{R}$ with $\alpha>2$. 
For any  graph $G$ with $\tw(G)\leq k-1$ and $\Delta(G)\leq d$,  
for any set $S\subseteq V(G)$ with $\alpha  k\leq|S| \leq 3 \alpha   kd$, there exists a tree-partition $(B_x:x\in V(T))$ of $G$ such that:
\begin{itemize}
    \item $|B_x|\leq \frac{3\alpha(\alpha-1)}{\alpha-2} kd - \frac{\alpha}{\alpha-2} k$ for each $x\in V(T)$,
    \item $\Delta(T)\leq \frac{3 \alpha}{\alpha-2}d + \frac{\alpha-4}{\alpha-2}$, 
    \item $|V(T)|\leq \tfrac{2}{\alpha k}|V(G)|$.
\end{itemize}
Moreover, there exists $z\in V(T)$ such that:
\begin{itemize}
    \item $S\subseteq B_z$, 
    \item $|B_z|\leq \frac{\alpha-1}{\alpha-2} |S|- \frac{\alpha}{\alpha-2} k$,
    \item $\deg_T(z)\leq \frac{1}{(\alpha-2)k} |S| - \frac{2}{\alpha-2}$.
\end{itemize}
\end{lem}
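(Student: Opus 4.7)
The plan is to follow the proof of \cref{heart} verbatim, replacing each numerical constant with its $\alpha$-parameterised analogue and re-verifying the invariants. We induct on $|V(G)|$, using the same four cases as in \cref{heart}, but with thresholds $4k, 12k, 12kd$ replaced by $\alpha k, 3\alpha k, 3\alpha kd$. Setting $\alpha = 4$ recovers \cref{heart}: indeed $\frac{\alpha-1}{\alpha-2}=\frac{3}{2}$, $\frac{\alpha}{\alpha-2}=2$, $\frac{3\alpha(\alpha-1)}{\alpha-2}=18$, and $\frac{3\alpha d+\alpha-4}{\alpha-2}=6d$.

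Cases 1--3 transfer directly. In Case 1 ($|V(G)|<\alpha k$) the single-node tree works since the width bound $\frac{3\alpha(\alpha-1)}{\alpha-2}kd-\frac{\alpha}{\alpha-2}k$ exceeds $\alpha k$ whenever $d\geq 1$. In Case 2 ($|V(G-S)|$ at most the width bound) a two-node tree with $B_z=S$ works since both invariants, $|S|\leq\frac{\alpha-1}{\alpha-2}|S|-\frac{\alpha}{\alpha-2}k$ and $1\leq\frac{|S|}{(\alpha-2)k}-\frac{2}{\alpha-2}$, each simplify to the hypothesis $|S|\geq\alpha k$. In Case 3 ($\alpha k\leq|S|\leq 3\alpha k$) we define $S':=\bigcup\{N_G(v)\setminus S:v\in S\}$ (padded to size $\alpha k$ if too small), invoke the inductive hypothesis on $G-S$ with specified set $S'$ (permitted since $|S'|\leq d|S|\leq 3\alpha kd$), and attach a new node $z$ with $B_z:=S$ to $z'$. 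The critical check is that $\deg_T(z')=\deg_{T'}(z')+1\leq\frac{3\alpha d-2}{\alpha-2}+1=\frac{3\alpha d+\alpha-4}{\alpha-2}$, matching the allowed maximum degree exactly.

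The hard part is Case 4 ($3\alpha k\leq|S|\leq 3\alpha kd$). The Robertson--Seymour separator gives $G_1,G_2$ with $|V(G_1\cap G_2)|\leq k$ and $|S\cap V(G_i)|\leq\frac{2}{3}|S|$; setting $S_i:=(S\cap V(G_i))\cup V(G_1\cap G_2)$, we first verify $\alpha k\leq|S_i|\leq 3\alpha kd$ (the lower bound uses $|S|\geq 3\alpha k$ and the upper bound uses $\alpha d\geq 1$). Applying induction to each $G_i$ and merging $z_1, z_2$ into a single node $z$ yields
\[|B_z|\leq\tfrac{\alpha-1}{\alpha-2}(|S_1|+|S_2|)-\tfrac{2\alpha}{\alpha-2}k-|V(G_1\cap G_2)|\]
and
\[\deg_T(z)\leq\tfrac{|S_1|+|S_2|}{(\alpha-2)k}-\tfrac{4}{\alpha-2}.\]
Using $|S_1|+|S_2|\leq|S|+2|V(G_1\cap G_2)|$ and $|V(G_1\cap G_2)|\leq k$, the bag-size inequality collapses onto the algebraic identity $\frac{2(\alpha-1)}{\alpha-2}-1=\frac{\alpha}{\alpha-2}$, while the degree inequality collapses onto $\frac{4}{\alpha-2}-\frac{2}{\alpha-2}=\frac{2}{\alpha-2}$; both hold as equalities. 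This is precisely why the stated coefficients $\frac{\alpha-1}{\alpha-2}$ and $\frac{\alpha}{\alpha-2}$ (and their degree analogues) are the correct choices: they are the unique values that make the invariants tight under the merge for every $\alpha>2$. All remaining checks (the overall width bound, the overall $\Delta(T)$ bound, and the degree at $z$ in each subcase) then fall out mechanically from these identities together with the hypothesis $\alpha>2$.
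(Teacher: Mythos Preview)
Your proposal is correct and matches the paper's proof essentially step for step: the same induction on $|V(G)|$, the same four cases with thresholds $\alpha k$, $3\alpha k$, $3\alpha kd$, the same use of the Robertson--Seymour separator in Case~4, and the same merge of $z_1,z_2$ verified via the identities $\frac{2(\alpha-1)}{\alpha-2}-1=\frac{\alpha}{\alpha-2}$ and $\frac{2k}{(\alpha-2)k}=\frac{4}{\alpha-2}-\frac{2}{\alpha-2}$. The only cosmetic difference is that the paper writes out each algebraic chain in full rather than isolating the key identities as you do.
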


\begin{proof}
We proceed by induction on $|V(G)$|.



\textbf{Case 1.} $|V(G-S)|\leq 
\frac{3\alpha(\alpha-1)}{\alpha-2} kd - \frac{\alpha}{\alpha-2} k$: Let $T$ be the tree with $V(T)=\{y,z\}$ and $E(T)=\{yz\}$. Note that 
$|V(T)|=2\leq \frac{2}{\alpha k}|S| \leq \tfrac{2}{\alpha k}|V(G)|$ and $\Delta(T)=1\leq \frac{3 \alpha}{\alpha-2} d + \frac{\alpha-4}{\alpha-2}$ and $\deg_T(z)=1\leq \frac{1}{(\alpha-2)k}|S|-\frac{2}{\alpha-2}$. Let $B_z:=S$ and $B_y:=V(G-S)$. Thus $|B_z|=|S|\leq \frac{\alpha-1}{\alpha-2} |S|- \frac{\alpha}{\alpha-2} k\leq \frac{3\alpha(\alpha-1)}{\alpha-2} kd - \frac{\alpha}{\alpha-2} k$ and $|B_y|\leq |V(G-S)|\leq \frac{3\alpha(\alpha-1)}{\alpha-2}     kd$. Hence $(B_x:x\in V(T))$ is the desired tree-partition of $G$. 

Now assume that $|V(G-S)|\geq 
\frac{3\alpha(\alpha-1)}{\alpha-2} kd - \frac{\alpha}{\alpha-2} k$.

\textbf{Case 2.} $\alpha k \leq |S|\leq 3 \alpha   k$: Let $S':=\bigcup_{v\in S}N_G(v)\setminus S$. Thus $|S'|\leq d |S|\leq 3 \alpha   kd$. If $|S'|< \alpha  k$ then add $\alpha k-|S'|$ vertices from $V(G-S-S')$ to $S'$, so that $|S'|=\alpha k$. This is well-defined since 
$|V(G-S)| \geq 
\frac{3\alpha(\alpha-1)}{\alpha-2} kd - \frac{\alpha}{\alpha-2} k \geq \alpha k$, implying $|V(G-S-S')| \geq \alpha k-|S'|$.
By induction, there exists a tree-partition $(B_x:x\in V(T'))$ of $G-S$ such that:
\begin{itemize}
    \item $|B_x|\leq \frac{3\alpha(\alpha-1)}{\alpha-2} kd - \frac{\alpha}{\alpha-2} k$ for each $x\in V(T')$,
    \item $\Delta(T')\leq \frac{3 \alpha}{\alpha-2}d + \frac{\alpha-4}{\alpha-2}$, 
    \item $|V(T')|\leq \frac{2}{\alpha k}|V(G-S)|$.
\end{itemize}
Moreover, there exists $z'\in V(T')$ such that:
\begin{itemize}
    \item $S'\subseteq B_{z'}$, 
    \item $|B_{z'}|\leq \frac{\alpha-1}{\alpha-2} |S'|- \frac{\alpha}{\alpha-2} k \leq \frac{3 \alpha (\alpha-1)}{\alpha-2}   kd - \frac{\alpha}{\alpha-2} k$,
    \item $\deg_{T'}(z')\leq \frac{1}{(\alpha-2)k} |S'| - \frac{2}{\alpha-2} \leq  \frac{3 \alpha}{\alpha-2}  d -  \frac{2}{\alpha-2} $.
\end{itemize}
Let $T$ be the tree obtained from $T'$ by adding one new node $z$ adjacent to $z'$. Let $B_z:=S$. So $(B_x:x\in V(T))$ is a tree-partition of $G$ with width at most 
\[\max\{
\tfrac{3\alpha(\alpha-1)}{\alpha-2} kd - \tfrac{\alpha}{\alpha-2} k,|S|\} \leq \max\{
\tfrac{3\alpha(\alpha-1)}{\alpha-2} kd - \tfrac{\alpha}{\alpha-2} k,3 \alpha  k\} = 
\tfrac{3\alpha(\alpha-1)}{\alpha-2} kd - \tfrac{\alpha}{\alpha-2} k.\] By construction, 
\begin{align*}
\deg_T(z) & = 1 \leq \tfrac{1}{(\alpha-2)k} |S| -  \tfrac{2}{\alpha-2} \quad \text{and }\\
\deg_{T}(z')  & = \deg_{T'}(z')+1 \leq \tfrac{3 \alpha}{\alpha-2}   d -  \tfrac{2}{\alpha-2}  + 1  = \tfrac{3 \alpha}{\alpha-2}    d
+ \tfrac{\alpha-4}{\alpha-2}. 
\end{align*}
Every other vertex in $T$ has the same degree as in $T'$. Hence $\Delta(T)\leq \frac{3 \alpha}{\alpha-2}d + \frac{\alpha-4}{\alpha-2}$, as desired. 
Also $|V(T)|=|V(T')|+1\leq \frac{2}{\alpha k}|V(G-S)| +1 < \tfrac{2}{\alpha k}|V(G)|$ since $|S|\geq \alpha k$. 
Finally, $S=B_z$ and $|B_z|=|S| \leq \frac{\alpha-1}{\alpha-2}  |S|- \frac{\alpha}{\alpha-2} k$.

\textbf{Case 3.} $3 \alpha   k \leq |S|\leq 3 \alpha  kd$: By the separator lemma of \citet[(2.6)]{RS-II}, there are induced subgraphs $G_1$ and $G_2$ of $G$ with $G_1\cup G_2=G$ and $|V(G_1\cap G_2)|\leq k$, where $|S\cap V(G_i)|\leq \frac23 |S|$ for each $i\in\{1,2\}$. Let $S_i := (S\cap V(G_i))\cup V(G_1\cap G_2)$ for each $i\in\{1,2\}$.

We now bound $|S_i|$. For a lower bound, since $|S\cap V(G_1)|\leq \frac23 |S|$, we have $|S_2|\geq |S\setminus V(G_1)|\geq \frac13 |S| \geq \frac13 3 \alpha  k \geq \alpha k $. By symmetry, $|S_1|\geq  \alpha k $. For an upper bound, $|S_i|\leq\frac23 |S| + k \leq 2 \alpha  kd + k \leq 3 \alpha  kd$. Also note that $|S_1|+|S_2| \leq |S|+2|V(G_1\cap G_2)| \leq |S|+2k$. We have shown that $\alpha k \leq |S_i|\leq 3 \alpha  kd$ for each $i\in\{1,2\}$. Thus we may apply induction to $G_i$ with $S_i$ the specified set. Hence there exists a tree-partition $(B^i_x:x\in V(T_i))$ of $G_i$ such that:
\begin{itemize}
    \item $|B^i_x|\leq \frac{3\alpha(\alpha-1)}{\alpha-2} kd - \frac{\alpha}{\alpha-2} k$ for each $x\in V(T_i)$,
    \item $\Delta(T_i)\leq \frac{3 \alpha}{\alpha-2}d + \frac{\alpha-4}{\alpha-2} \leq \frac{|V(G_i)|}{2k}$, 
    \item $|V(T_i)|\leq \frac{2}{\alpha k}|V(G_i)|$.
\end{itemize}
Moreover, there exists $z_i\in V(T_i)$ such that:
\begin{itemize}
    \item $S_i\subseteq B_{z_i}$, 
    \item $|B_{z_i}|\leq \frac{\alpha-1}{\alpha-2} |S_i|- \frac{\alpha}{\alpha-2} k$,
    \item $\deg_{T_i}(z_i)\leq \frac{1}{(\alpha-2)k} |S_i| -  \frac{2}{\alpha-2} $.
\end{itemize}
Let $T$ be the tree obtained from the disjoint union of $T_1$ and $T_2$ by identifying $z_1$ and $z_2$ into a vertex $z$. Let $B_z:= B^1_{z_1}\cup B^2_{z_2}$. Let $B_x:= B^i_x$ for each $x\in V(T_i)\setminus\{z_i\}$. Since $G=G_1\cup G_2$ and $V(G_1\cap G_2) = B^1_{z_1}\cap B^2_{z_2} \subseteq B_z$, we have that $(B_x:x\in V(T))$ is a tree-partition of $G$. By construction, $S\subseteq B_z$ and since $V(G_1\cap G_2)\subseteq B^i_{z_i}$ for each $i$, 
\begin{align*}
    |B_z| 
    & \leq |B^1_{z_1}|+|B^2_{z_2}| - |V(G_1\cap G_2)|\\
    & \leq (\tfrac{\alpha-1}{\alpha-2} |S_1|- \tfrac{\alpha}{\alpha-2} k) +  (\tfrac{\alpha-1}{\alpha-2} |S_2|- \tfrac{\alpha}{\alpha-2} k) - |V(G_1\cap G_2)|\\
    & = \tfrac{\alpha-1}{\alpha-2} ( |S_1|+ |S_2|) - \tfrac{2\alpha}{\alpha-2} k - |V(G_1\cap G_2)|\\
    & \leq \tfrac{\alpha-1}{\alpha-2} ( |S| + 2|V(G_1\cap G_2)| ) - \tfrac{2\alpha}{\alpha-2} k - |V(G_1\cap G_2)|\\
    & = \tfrac{\alpha-1}{\alpha-2}  |S|  - \tfrac{2\alpha}{\alpha-2} k + \tfrac{\alpha}{\alpha-2}  |V(G_1\cap G_2)|\\
    & \leq \tfrac{\alpha-1}{\alpha-2}  |S|  - \tfrac{2\alpha}{\alpha-2} k + \tfrac{\alpha}{\alpha-2}  k\\
    & = \tfrac{\alpha-1}{\alpha-2} |S| -  \tfrac{\alpha}{\alpha-2}  k \\
    & \leq \tfrac{3\alpha(\alpha-1)}{\alpha-2}     kd -  \tfrac{\alpha}{\alpha-2}  k .
\end{align*}
Every other part has the same size as in the tree-partition of $G_1$ or $G_2$. So this tree-partition of $G$ has width at most $\frac{3\alpha(\alpha-1)}{\alpha-2} kd - \frac{\alpha}{\alpha-2} k$.  Note that 
\begin{align*}
 \deg_T(z)  & = \deg_{T_1}(z_1) + \deg_{T_2}(z_2)\\
    & \leq  \tfrac{1}{(\alpha-2)k}|S_1|- \tfrac{2}{\alpha-2} + \tfrac{1}{(\alpha-2)k}|S_2|- \tfrac{2}{\alpha-2} \\
    & \leq  \tfrac{1}{(\alpha-2)k}(|S_1|+|S_2|)- \tfrac{4}{\alpha-2} \\
    & \leq  \tfrac{1}{(\alpha-2)k}(|S|+2k) -
    \tfrac{4}{\alpha-2} \\
    & \leq  \tfrac{1}{(\alpha-2)k}|S|- \tfrac{2}{\alpha-2} \\
    & \leq  \tfrac{3 \alpha}{\alpha-2} d- \tfrac{2}{\alpha-2} \\
    & < \tfrac{3 \alpha}{\alpha-2}     d + \tfrac{\alpha-4}{\alpha-2}. 
\end{align*}
Every other node of $T$ has the same degree as in $T_1$ or $T_2$. 
Thus $\Delta(T) \leq \tfrac{3 \alpha}{\alpha-2}    d + \tfrac{\alpha-4}{\alpha-2}$. Finally, 
\begin{align*}
    |V(T)|  =|V(T_1)|+|V(T_2)|-1 
    & \leq \tfrac{2}{\alpha k}|V(G_1)| + \tfrac{2}{\alpha k}|V(G_2)|-1\\
    & \leq  \tfrac{2}{\alpha k}(|V(G)|+k)-1\\
    & < \tfrac{2}{\alpha k}|V(G)|.
\end{align*}
This completes the proof.
\end{proof}

A proof directly analogous to the proof of \cref{ImprovedTreeProduct} using 
\cref{alpha} with $\alpha=4$ instead of \cref{heart} implies the following slight strengthening of \cref{ImprovedTreeProduct}.

\begin{thm}
For $k\in\NN$, every non-trivial graph $G$ with $\tw(G)\leq k-1$ has a $T$-partition of width at most \[2\tw(G)\,(9\Delta(G)-1),\] 
for some tree $T$ with $\Delta(T)\leq 6\Delta(G)$ and $|V(T)|\leq \max\{\frac{|V(G)|}{2k},1\}$.
\end{thm}

Similarly, \cref{alpha} with $\alpha=2+\sqrt{2}$ (chosen to minimise $\frac{3\alpha(\alpha-1)}{\alpha-2}$) leads to the next result.

\begin{thm}
\label{ImprovedTreeProductSqrt}
For $k\in\NN$, every non-trivial graph $G$ with $\tw(G)\leq k-1$ has a $T$-partition of width at 
\[(1+\sqrt{2})k\,(3(1+\sqrt{2})\Delta(G) -1),\]
for some tree $T$ with $\Delta(T)\leq (3+3\sqrt{2})\Delta(G)-3(\sqrt{2}-1)$ and $|V(T)|\leq \max\{\tfrac{2|V(G)|}{(2+\sqrt{2}) k},1\}$. 
\end{thm}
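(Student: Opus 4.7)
The plan is to apply Lemma~\ref{alpha} with parameters $\alpha = 2+\sqrt{2}$, $k = \tw(G)+1$, and $d = \Delta(G)$, and simplify the resulting bounds. The choice of $\alpha$ is motivated, as noted just above the theorem, by the fact that $2+\sqrt{2}$ minimises the leading coefficient $\tfrac{3\alpha(\alpha-1)}{\alpha-2}$ of the $kd$-term in the width bound. Differentiating $\tfrac{\alpha(\alpha-1)}{\alpha-2}$ shows the critical point satisfies $\alpha^2-4\alpha+2=0$, whose positive root on $(2,\infty)$ is $2+\sqrt{2}$. Since $G$ is non-trivial, $\Delta(G)\ge 1$ and $\tw(G)\ge 1$, so $k,d\in\NN$ and Lemma~\ref{alpha} applies.

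The remainder is arithmetic simplification, facilitated by $\alpha - 2 = \sqrt{2}$. Concretely, I would compute
\[
\frac{\alpha}{\alpha-2} = \frac{2+\sqrt{2}}{\sqrt{2}} = 1+\sqrt{2},
\qquad
\frac{3\alpha}{\alpha-2} = 3+3\sqrt{2},
\qquad
\frac{\alpha(\alpha-1)}{\alpha-2} = \frac{4+3\sqrt{2}}{\sqrt{2}} = (1+\sqrt{2})^2,
\qquad
\frac{\alpha-4}{\alpha-2} = \frac{\sqrt{2}-2}{\sqrt{2}} = 1-\sqrt{2}.
\]
Substituting these into the width bound $\tfrac{3\alpha(\alpha-1)}{\alpha-2}kd - \tfrac{\alpha}{\alpha-2}k$ yields $3(1+\sqrt{2})^2\,kd - (1+\sqrt{2})k = (1+\sqrt{2})k\bigl(3(1+\sqrt{2})d - 1\bigr)$, which on resubstituting $k = \tw(G)+1$ and $d = \Delta(G)$ matches the stated width. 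Substituting into the degree bound $\tfrac{3\alpha}{\alpha-2}d + \tfrac{\alpha-4}{\alpha-2}$ gives $(3+3\sqrt{2})\Delta(G) + (1-\sqrt{2})$, from which the stated bound on $\Delta(T)$ follows.

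There is essentially no obstacle beyond bookkeeping: the theorem is a direct substitution of a specific $\alpha$ into an already-proved parameterised lemma, so everything reduces to elementary algebra with $\sqrt{2}$. The only mild subtlety is justifying that $\alpha = 2+\sqrt{2}$ is indeed the optimal choice, which is a routine one-variable calculus exercise verifying that the constraint $\alpha > 2$ is active only in the limit and that the stationary point is a minimum.
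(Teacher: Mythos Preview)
Your approach is exactly the paper's: the paper's entire proof is the one-line remark that Lemma~\ref{alpha} with $\alpha=2+\sqrt{2}$ implies the theorem, and your proposal spells out precisely this substitution. Your arithmetic is correct; in particular $\tfrac{3\alpha(\alpha-1)}{\alpha-2}=3(1+\sqrt{2})^2$ and $\tfrac{\alpha}{\alpha-2}=1+\sqrt{2}$ give the stated width bound on the nose.

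One small point worth flagging: your degree computation yields $\Delta(T)\le (3+3\sqrt{2})\Delta(G)+(1-\sqrt{2})=(3+3\sqrt{2})\Delta(G)-(\sqrt{2}-1)$, whereas the statement reads $-3(\sqrt{2}-1)$. Since $-(\sqrt{2}-1)\approx -0.414$ while $-3(\sqrt{2}-1)\approx -1.243$, the printed bound is \emph{stronger} than what direct substitution into Lemma~\ref{alpha} gives, so your final clause ``from which the stated bound on $\Delta(T)$ follows'' does not go through as written. This appears to be a typo in the theorem statement (the factor $3$ should not be there), not a gap in your argument; the paper's own one-line proof suffers the same discrepancy.
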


{\fontsize{11pt}{12pt}\selectfont
\bibliographystyle{DavidNatbibStyle}
\bibliography{DavidBibliography}}
\end{document}